\newtheorem{cor}{Corollary}
\newtheorem{coro}{Corollary}
\newtheorem{defi}{Definition}
\newtheorem{teo}{Theorem}
\newtheorem{theo}{Theorem}
\newtheorem{lema}{Lemma}
\newtheorem*{acknowledgements}{Acknowledgements}
\title{Spaces where all bijections are morphisms}
\author{Lucas H. R. de Souza}
\begin{document}

\DeclareGraphicsExtensions{.pdf,.jpg,.mps,.png,}

\maketitle

\begin{abstract}Here we classify all topological spaces where all bijections to itself are homeomorphisms. As a consequence, we also classify all topological spaces where all maps to itself are continuous.

Analogously, we classify all measurable spaces where all bijections to itself are measurable with measurable inverse. As a consequence, we also classify all measurable spaces where all maps to itself are measurable.
    
\end{abstract}

\let\thefootnote\relax\footnote{Mathematics Subject Classification (2010). Primary: 54F65, 28A05; Secondary: 54A10, 54A25, 54C05.}
\let\thefootnote\relax\footnote{Keywords: Cofinite topology, cocountable topology, cardinal, countable-cocountable $\sigma$-algebra, bijections.}

\section{Introduction}

We have trivially that topological spaces with the trivial or the discrete topology have the property that all bijections to itself are homeomorphisms. Consider the following family of topological spaces:

\begin{defi}Let $\kappa$ be a cardinal number and $X$ a topological space. The co-$\kappa$ topology on $X$ is the topology where the open sets are the empty set and the subsets of $X$ whose complement has cardinality less than $\kappa$. We say that a topological space is cocardinal if it has the co-$\kappa$ topology for some cardinal $\kappa$.    
\end{defi}

This topology is not defined for $1 < \kappa < \aleph_{0}$. Note that the co-$\aleph_{0}$ topology is the cofinite topology (see p. 49 of \cite{SS} for more information about these topological spaces) and the co-$\aleph_{0}^{+}$ topology is the cocountable topology (if $\alpha$ is a cardinal, then we denote by $\alpha^{+}$ its successor cardinal, see Chapters I.10 to I.13 of  \cite{Kun} for cardinal numbers). Note also that the co-$1$ topology is the trivial topology and the co-$\kappa$ topology is the discrete topology if $\kappa$ is bigger than the cardinality of $X$.

It is easy to see that, for cocardinal topologies, all bijective maps are homeomorphisms. Our objective is to show that this property actually characterizes the cocardinal topologies:

\begin{theo}Let $X$ be a topological space. Then $Homeo(X) = bij(X)$ if and only if $X$ is cocardinal.
\end{theo}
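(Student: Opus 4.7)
The plan is to prove the nontrivial direction: a topology $\tau$ on $X$ for which every self-bijection is a homeomorphism is necessarily cocardinal. The hypothesis is equivalent to $\tau$ being invariant under the full symmetric group on $X$, so whether a subset $U$ lies in $\tau$ depends only on the orbit of $U$ under permutations, that is, only on the pair $(|U|,|X\setminus U|)$.

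I would first dispose of the finite case $|X|=n$ by a direct argument: the topology is encoded by a set $S\subseteq\{0,1,\ldots,n\}$ of allowed cardinalities of opens. If $m$ is the smallest positive element of $S$, the case $1<m<n$ is ruled out by choosing two distinct $m$-element subsets intersecting in exactly $m-1$ points; their intersection is open and has strictly smaller positive size, contradicting minimality. Hence $m\in\{1,n\}$, giving the discrete or the trivial topology, both cocardinal.

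For $X$ infinite of cardinality $\kappa$, the heart of the argument is to show that the set $\mathcal{M}=\{|X\setminus U|:U\in\tau,\,U\neq\emptyset\}$ of realized cosizes is an initial segment $\{\nu:\nu<\kappa^{*}\}$ of cardinals, so that $\tau$ coincides with the co-$\kappa^{*}$ topology. The key lemma is downward closure of $\mathcal{M}$: given a proper closed set $F$ with $|F|<\kappa$ and $F'\subseteq F$, I would produce a permutation $\sigma\colon X\to X$ fixing $F'$ pointwise and sending $F\setminus F'$ into $X\setminus F$. This is possible because $|X\setminus F|=\kappa\geq|F\setminus F'|$, and then $F'=F\cap\sigma(F)$ is an intersection of two closed sets, hence closed. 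Closure of $\mathcal{M}$ under finite cardinal sums, coming from finite unions of disjoint closed sets, then forces $\kappa^{*}$ to be either $1$ or an infinite cardinal, matching exactly the cardinals for which the co-$\kappa^{*}$ topology is defined.

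The step I expect to require the most care is the auxiliary case where some non-empty open $U$ has $|X\setminus U|=\kappa$, since then $F=X\setminus U$ has size $\kappa$ and the downward-closure lemma does not directly apply. I would dispatch this separately by an intersection argument: if $|U|<\kappa$, two appropriately placed sets of profile $(|U|,\kappa)$ can be chosen to intersect in a single point; if $|U|=\kappa$, an analogous construction with two sets of profile $(\kappa,\kappa)$ works because $X$ admits three pairwise disjoint subsets of sizes $1$, $\kappa$, $\kappa$. In either case some singleton is open, the topology is discrete, and discrete equals the co-$|X|^{+}$ topology, which is cocardinal. Combining cases then yields the classification.
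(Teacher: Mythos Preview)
Your plan is correct and follows the same overall architecture as the paper: determine which cardinalities occur as sizes of proper closed sets, prove this set is downward closed, and handle separately the case of a proper closed set of full cardinality $|X|$ by producing two open sets that meet in a single point (your auxiliary case is exactly the paper's Lemma~\ref{twosets} recast in orbit language). The technical execution differs in a few places worth noting. First, you lead with the orbit observation that membership in $\tau$ depends only on the profile $(|U|,|X\setminus U|)$; this absorbs the paper's Lemma~\ref{samecardinalityisclosed} into a single sentence. Second, your downward-closure step, writing $F'=F\cap\sigma(F)$ for one permutation $\sigma$ that swaps $F\setminus F'$ into $X\setminus F$, is cleaner and more uniform than the paper's Lemma~\ref{lesscardinalityisclosed}, which splits into a finite and an infinite case and uses an arbitrary (possibly infinite) intersection of closed sets. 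Third, you make explicit the closure of $\mathcal{M}$ under finite sums to force $\kappa^{*}\in\{1\}\cup[\aleph_{0},\infty)$, a point the paper uses tacitly but never states. Your separate treatment of the finite case via the minimal positive open size is also different from (and more direct than) the paper's route through Lemma~\ref{lesscardinalityisclosed}. None of these differences changes the logical skeleton, but your packaging is somewhat tighter.
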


Here $Homeo(X)$ is the set of all homeomorphisms from $X$ to $X$ and $bij(X)$ is the set of all bijective maps from $X$ to $X$.

As a consequence, we have the following:

\begin{coro}Let $X$ be a topological space. Then $\mathcal{C}(X,X) = Map(X,X)$ if and only if $X$ has the trivial topology or the discrete topology.    
\end{coro}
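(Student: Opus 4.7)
The plan is to bootstrap the corollary from the main theorem. The easy direction is immediate: if $X$ has the trivial or the discrete topology then every set-theoretic self-map is continuous.

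For the non-trivial direction, I would first observe that if $\mathcal{C}(X,X)=Map(X,X)$ then in particular every bijection, together with its set-theoretic inverse (which is itself a bijection, hence a map), lies in $\mathcal{C}(X,X)$; so $bij(X) \subseteq Homeo(X)$, and the reverse inclusion is automatic. The theorem then tells us that $X$ carries the co-$\kappa$ topology for some cardinal $\kappa$. It therefore suffices to show that the only values of $\kappa$ compatible with $\mathcal{C}(X,X)=Map(X,X)$ are $\kappa=1$ (trivial) and $\kappa > |X|$ (discrete).

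The key step is ruling out the genuinely intermediate case, where $\aleph_{0}\leq \kappa \leq |X|$ (the ranges $1<\kappa<\aleph_{0}$ being excluded by the very definition of cocardinal). I would do this by exhibiting a discontinuous self-map. Using the cardinal identity $\kappa+\kappa=\kappa$ for infinite $\kappa$, partition $X = A \sqcup B$ with $|A|, |B| \geq \kappa$, fix distinct points $a,b \in X$, and define $f$ to be constantly $a$ on $A$ and constantly $b$ on $B$. Then $X \setminus \{a\}$ is open in the co-$\kappa$ topology (its complement has size $1 < \kappa$), while its preimage under $f$ is exactly $B$, whose complement $A$ has cardinality at least $\kappa$. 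Thus $B$ is not open and $f$ is not continuous, contradicting $\mathcal{C}(X,X)=Map(X,X)$.

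The main obstacle is essentially cardinal-arithmetic, namely that a set of cardinality at least an infinite $\kappa$ admits a partition into two pieces each of cardinality at least $\kappa$; once the cocardinal structure is provided by the theorem, nothing else of substance is needed, and the two-valued step function witnesses the failure of continuity in the only remaining case.
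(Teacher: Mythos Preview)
Your argument is correct and follows essentially the same route as the paper: reduce to the cocardinal case via the main theorem, then in the intermediate range $\aleph_{0}\le\kappa\le\#X$ exhibit a two-valued self-map whose fiber over a point is a non-closed set. The only cosmetic difference is that the paper simply invokes the existence of some non-closed $F\subset X$ (immediate since the co-$\kappa$ topology is not discrete in this range) and takes the map sending $F$ to one point and $X\setminus F$ to another, whereas you explicitly manufacture such an $F$ by splitting $X$ into two pieces of size at least $\kappa$.
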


Here $\mathcal{C}(X,X)$ is the set of continuous maps from $X$ to $X$ and $Map(X,X)$ is the set of all maps from $X$ to $X$.

We have also an analogous version of this theorem for measurable spaces:

\begin{defi}Let $\kappa$ be a cardinal number and $X$ a measurable space. The $\kappa$-co-$\kappa$ algebra on $X$ is the $\sigma$-algebra where the measurable sets are the subsets of $X$ with cardinality less than $\kappa$ or its complement has cardinality less than $\kappa$. We say that a measurable space is cocardinal if it has the $\kappa$-co-$\kappa$ algebra for some cardinal $\kappa$.    
\end{defi}

This $\sigma$-algebra is not defined for $1 < \kappa \leqslant \aleph_{0}$. Note that the $\aleph_{0}^{+}$-co-$\aleph_{0}^{+}$ algebra is the countable-cocountable algebra (see Examples 3.3 of \cite{Sc} for the proof that this is actually a $\sigma$-algebra). Note also that the $1$-co-$1$ algebra is the trivial $\sigma$-algebra and the $\kappa$-co-$\kappa$ algebra is the discrete $\sigma$-algebra if $\kappa$ is bigger than the cardinality of $X$.

If $X$ is a measurable space, let $Iso(X)$ be the set of all measurable maps from $X$ to $X$ with measurable inverse. 

\begin{theo}Let $X$ be a measurable space. Then $Iso(X) = bij(X)$ if and only if $X$ is cocardinal.
\end{theo}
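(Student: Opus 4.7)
The plan is to mimic the strategy of the preceding topological theorem, with the $\sigma$-algebra $\Sigma$ replacing the topology on $X$. The easy implication $(\Leftarrow)$ is routine: a bijection $\phi\colon X\to X$ preserves the cardinality of each subset and of its complement, so it carries the $\kappa$-co-$\kappa$ algebra to itself, yielding $bij(X)\subseteq Iso(X)$; the reverse inclusion is trivial.

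For $(\Rightarrow)$, suppose $Iso(X)=bij(X)$. The first key observation is the bijection-invariance of $\Sigma$: whenever $A\in\Sigma$ and $B\subseteq X$ satisfy $|A|=|B|$ and $|X\setminus A|=|X\setminus B|$, some bijection $\phi\colon X\to X$ sends $A$ to $B$, and by hypothesis $\phi\in Iso(X)$, so $B=\phi(A)\in\Sigma$. Thus membership in $\Sigma$ depends only on the pair of cardinals $(|A|,|X\setminus A|)$, which we call the \emph{type} of $A$.

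Assuming $\Sigma\neq\{\emptyset,X\}$, pick $A\in\Sigma$ with $0<|A|\leq|X\setminus A|$ (swapping with the complement if needed). Choosing $A_1,A_2\subseteq X$ of the same type as $A$ with $|A_1\cap A_2|=1$—always possible by a short cardinal-arithmetic check—we obtain a measurable singleton $A_1\cap A_2\in\Sigma$, and bijection-invariance then forces every singleton into $\Sigma$, hence every countable set as well. Set
\[ K=\{|A|:A\in\Sigma\text{ and }|A|\leq|X\setminus A|\}. \]
A variant of the same construction—taking $A_1,A_2$ of the type of a fixed $A\in\Sigma$ with $|A_1\cap A_2|$ equal to any prescribed $\beta\leq|A|$—shows that $K$ is downward closed among cardinals, and disjoint countable families of measurable sets of prescribed sizes show $K$ is additionally closed under countable suprema. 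Therefore either $K=\{\alpha:\alpha\leq|X|\}$, in which case $\Sigma$ is discrete and we may take $\kappa:=|X|^+$, or $K=\{\alpha:\alpha<\kappa\}$ for a unique cardinal $\kappa$ of uncountable cofinality.

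It then remains only to verify that $\Sigma$ coincides with the $\kappa$-co-$\kappa$ algebra for the $\kappa$ so identified; by type-invariance, this reduces to comparing admissible types, and splitting subsets of $X$ into the three shapes small ($|A|<|X|$), co-small ($|X\setminus A|<|X|$), and both-large ($|A|=|X\setminus A|=|X|$), each comparison is immediate from the definition of $K$. The main obstacle will be the cardinal-arithmetic bookkeeping in constructing the auxiliary pairs $A_1,A_2$: the cases $|A|$ finite, $|A|$ infinite with $|A|<|X|$, and $|A|=|X|$ must be handled separately, partitioning $X$ by hand in each situation. A brief aside is also needed when $X$ itself is finite, where the singleton argument immediately forces $\Sigma$ to be either trivial or discrete.
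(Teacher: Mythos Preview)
Your proposal is correct, and the underlying combinatorial ingredients---type invariance under bijections (the paper's Lemma~\ref{samecardinalityismeasurable}) and the ``two sets of the same type intersecting in a prescribed smaller set'' construction (the paper's Lemmas~\ref{lesscardinalityismeasurable} and~\ref{secondlesscardinalityismeasurable})---are the same ones the paper uses. Where you diverge is in organization: the paper defines $\kappa=\sup\{\kappa'\neq\#X:\exists F\in\Sigma,\ \#F=\kappa'\}$ and then works through a rather long case split (on whether $\kappa=0$, whether $X$ is finite, whether $\kappa=\#X$, and whether a ``both-large'' measurable set exists), invoking the three lemmas repeatedly inside each branch. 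You instead package everything into the set $K$ of admissible small cardinalities and prove two structural facts about it---downward closure and closure under countable suprema---from which the shape $K=[0,\kappa)$ or $K=[0,\#X]$ drops out directly, and the identification of $\Sigma$ follows by a single type comparison. This is genuinely cleaner: the countable-sup closure step makes explicit \emph{why} the resulting $\kappa$ must have uncountable cofinality (it is exactly the $\sigma$-additivity of $\Sigma$ at work), a point the paper leaves implicit. The cost is that the cardinal-arithmetic bookkeeping you flag---constructing the pairs $A_1,A_2$ in the finite, small-infinite, and full-size cases, and placing the disjoint countable family inside a half of $X$ when $\sup\alpha_n=\#X$---still has to be done, and is essentially the content of the paper's lemmas; so the total work is comparable, just better structured.
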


As a consequence, we have the following:

\begin{coro}Let $X$ be a measurable space. Then $Mor(X,X) = Map(X,X)$ if and only if $X$ has the trivial $\sigma$-algebra or the discrete $\sigma$-algebra.    
\end{coro}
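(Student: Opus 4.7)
The plan is to reduce the corollary to the preceding Theorem. The easy direction ($\Leftarrow$) is immediate: under the trivial $\sigma$-algebra the only measurable sets are $\emptyset$ and $X$, so every map has measurable preimages; under the discrete $\sigma$-algebra every subset is measurable, so again every preimage is measurable.

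For the nontrivial direction, suppose $Mor(X,X) = Map(X,X)$. Then \emph{a fortiori} every bijection is measurable with measurable inverse, i.e.\ $Iso(X) = bij(X)$, so the preceding Theorem yields that the $\sigma$-algebra of $X$ is the $\kappa$-co-$\kappa$ algebra for some cardinal $\kappa$. Since $\kappa = 1$ gives the trivial $\sigma$-algebra and $\kappa > |X|$ gives the discrete $\sigma$-algebra, the task reduces to ruling out the intermediate range $\aleph_{0} < \kappa \leqslant |X|$.

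Assume for contradiction that $\aleph_{0} < \kappa \leqslant |X|$; the plan is then to exhibit a non-measurable map $f \colon X \to X$. Since $|X| \geqslant \kappa$ is infinite, $|X| = |X| + |X|$, so one can partition $X = B \sqcup C$ with $|B|, |C| \geqslant \kappa$. Then $B$ is not measurable in the $\kappa$-co-$\kappa$ algebra, because neither $B$ nor its complement $C$ has cardinality strictly less than $\kappa$. Fix distinct points $x_{0}, x_{1} \in X$ and define $f(x) = x_{0}$ on $B$ and $f(x) = x_{1}$ on $C$; the singleton $\{x_{0}\}$ is measurable (its cardinality $1$ is less than $\kappa$), but $f^{-1}(\{x_{0}\}) = B$ is not, so $f \notin Mor(X,X)$, contradicting the hypothesis.

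The only technical step is the existence of the partition, which is routine cardinal arithmetic ($\lambda + \lambda = \lambda$ for infinite $\lambda$), so no serious obstacle arises. All the genuine content of the corollary is carried by the preceding Theorem; the argument above merely records that an intermediate cocardinal $\sigma$-algebra always admits a two-valued non-measurable function.
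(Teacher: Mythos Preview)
Your proof is correct and follows essentially the same route as the paper, whose proof reads simply ``Analogous to \textbf{Corollary \ref{firstcorollary}}''; spelling that analogy out gives precisely your argument: reduce to the Theorem, then for an intermediate $\kappa$-co-$\kappa$ algebra exhibit a two-valued map whose fiber over a singleton is a non-measurable set. Your version is slightly more explicit in constructing the non-measurable set $B$ via a partition $X = B \sqcup C$ with $|B| = |C| = |X|$, whereas the paper's topological argument simply invokes the existence of a non-closed $F$, but this is a cosmetic difference only.
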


Here $Mor(X,X)$ is the set of measurable maps from $X$ to $X$.

\begin{acknowledgements}
The \ author \ was \ partially \ supported \ by \ the joint \ FAPEMIG/CNPq program "Programa de Apoio à Fixação de Jovens Doutores no Brasil", Grant Number BPD-00721-22 (FAPEMIG), 150784/2023-6 (CNPq).
\end{acknowledgements}

\section{Proof of Theorem 1}

For a topological space $X$, consider $Closed(X)$ as the set of closed sets of $X$. For a set $X$, we denote the cardinality of $X$ by $\# X$.

\begin{lema}\label{samecardinalityisclosed}Let $X$ be a topological space such that $Homeo(X) = bij(X)$. Let $F \in Closed(X)$ such that $\# F < \# X$. If $F' \subseteq X$ such that $\# F = \# F'$, then $F' \in Closed(X)$.
\end{lema}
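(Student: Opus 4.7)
The natural approach is to produce a bijection $f : X \to X$ that sends $F$ to $F'$, and then invoke the hypothesis $Homeo(X) = bij(X)$ to conclude that $F' = f(F)$ is the image of a closed set under a homeomorphism, hence closed.

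To build such an $f$, I need to partition $X$ as $F \sqcup (X \setminus F)$ and $F' \sqcup (X \setminus F')$ and match pieces of equal cardinality. The hypothesis already gives $\#F = \#F'$, so the key step is to verify $\#(X \setminus F) = \#(X \setminus F')$. If $X$ is finite, this is immediate from $\#F = \#F'$. If $X$ is infinite, the assumption $\#F < \#X$ together with $\#X = \#F + \#(X \setminus F)$ forces $\#(X \setminus F) = \#X$ by standard infinite cardinal arithmetic, and the same argument applied to $F'$ (using $\#F' = \#F < \#X$) gives $\#(X \setminus F') = \#X$. Either way, the two complements have the same cardinality.

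Once that is established, choose any bijections $\phi : F \to F'$ and $\psi : X \setminus F \to X \setminus F'$ and set $f = \phi \cup \psi$, a bijection $X \to X$ with $f(F) = F'$. By the hypothesis $Homeo(X) = bij(X)$, $f$ is a homeomorphism. Since $F$ is closed, $F' = f(F)$ is closed as well, which is the conclusion.

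There is no real obstacle here; the only subtlety is the cardinal arithmetic splitting into the finite and infinite cases, and the role of the strict inequality $\#F < \#X$ (which is exactly what guarantees the complements match in the infinite case). The hypothesis $Homeo(X) = bij(X)$ is used only at the very end, but it is used crucially: the homogeneity statement says closedness is preserved under arbitrary bijections of $X$, and the whole lemma is simply that statement restricted to two equinumerous subsets of size less than $\#X$.
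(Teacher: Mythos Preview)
Your argument is correct and follows exactly the paper's own proof: match $F$ with $F'$ and their complements (using $\#F<\#X$ to get $\#(X\setminus F)=\#(X\setminus F')$, splitting into the finite and infinite cases), assemble a bijection $f$ with $f(F)=F'$, and apply $Homeo(X)=bij(X)$ to conclude $F'$ is closed.
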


\begin{proof}If $X$ is infinite and $\# F < \# X$, then $\# (X - F) = \# X$. Analogously,  $\# (X - F') = \# X$. So  $\# (X - F) = \# (X - F')$ (this occurs trivially for a finite set $X$). So there exists a bijection $f: X \rightarrow X$ such that $f(F) = F'$ and $f(X- F) = X - F'$. Since $Homeo(X) = bij(X)$, then $f$ is a homeomorphism, which implies that $F'$ is closed.
\end{proof}

\begin{lema}\label{lesscardinalityisclosed}Let $X$ be a topological space such that $Homeo(X) = bij(X)$. Let $F \in Closed(X)$ such that $\# F < \# X$. If $F' \subseteq X$ such that $\# F > \# F'$, then $F' \in Closed(X)$.
\end{lema}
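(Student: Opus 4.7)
The plan is to express $F'$ as an intersection of sets each of cardinality $\#F$, and then invoke Lemma \ref{samecardinalityisclosed} together with the fact that arbitrary intersections of closed sets are closed in any topological space.

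For each $x \in X \setminus F'$, I will construct a subset $G_x \subseteq X$ with $F' \subseteq G_x$, $x \notin G_x$, and $\#G_x = \#F$. The construction is to pick $A_x \subseteq X \setminus (F' \cup \{x\})$ and set $G_x := F' \cup A_x$, choosing $\#A_x$ so that $\#G_x = \#F$. The hypothesis $\#F < \#X$ provides exactly the room needed: if $\#F$ is infinite one can take $A_x$ of cardinality $\#F$, since $\#(X \setminus (F' \cup \{x\})) = \#X \geq \#F$; if $\#F$ is finite (in particular, $X$ may be finite too) one only needs $\#F - \#F'$ new points, and at least $\#X - \#F' - 1 \geq \#F - \#F'$ are available.

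By Lemma \ref{samecardinalityisclosed}, applied to $F$ and $G_x$ (both of cardinality $\#F < \#X$), each $G_x$ is closed. The inclusion $F' \subseteq \bigcap_{x \in X \setminus F'} G_x$ is immediate from the construction, while for any $y \in X \setminus F'$ the relation $y \notin G_y$ shows that $y$ cannot lie in the intersection. Hence $F' = \bigcap_{x \in X \setminus F'} G_x$ is an intersection of closed sets, and therefore closed.

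The only part of the argument that requires real care is the cardinal arithmetic used in the construction of $G_x$; this is dispatched uniformly by the strict inequalities $\#F' < \#F < \#X$, after which the rest of the proof is formal.
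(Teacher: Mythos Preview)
Your proof is correct and uses the same core mechanism as the paper's: write the target set as an intersection of subsets of cardinality $\#F$, invoke Lemma~\ref{samecardinalityisclosed} on each, and use that arbitrary intersections of closed sets are closed.

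The execution differs slightly. The paper splits into cases according to whether $F$ is finite or infinite; in the infinite case it works \emph{inside} $F$, first showing that some $G\subseteq F$ with $\#G=\#F'$ is closed (writing $G=\bigcap_{x\in F\setminus G}(F\setminus\{x\})$), and only afterwards transfers this to $F'$ via Lemma~\ref{samecardinalityisclosed}. You instead work directly \emph{around} $F'$, building for each $x\in X\setminus F'$ a set $G_x\supseteq F'$ of cardinality $\#F$ avoiding $x$, and obtain $F'=\bigcap_x G_x$ in one stroke. This buys you a more uniform argument: the finite/infinite distinction collapses to a single line of cardinal arithmetic, and the extra transfer step at the end is eliminated. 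The paper's version has the minor advantage that the sets $F\setminus\{x\}$ are canonical rather than chosen, but your approach is cleaner overall.
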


\begin{proof}Suppose that $F$ is finite.

Let $x \in F$. Then $\{x\} = \bigcap \{G \subseteq X: x \in G, \# G = \# F\}$. By the lemma above, all subsets $G$ are closed, which implies that $\{x\}$ is closed and, since $F$ is finite, every subset of $F$ is also closed.

Suppose that $F$ is infinite.

Let $G \subseteq F$ such that $\# G =\# F'$. Let $x \in F - G$. We have that $\# (F- \{x\}) = \# F$. By the lemma above, $F- \{x\}$ is closed. But $G = \bigcap_{x \in F-G} F-\{x\}$, which implies that $G$ is closed. Since $\# G = \# F'$, then, by the lemma above, $F'$ is closed.
\end{proof}

\begin{lema}\label{twosets}Let $X$ be an infinite set,  $x \in X$ and $F \subset X$ such that $\# F = \# X$. Then there exists  $U,V \subseteq X$ such that $U\cap V = \{x\}$, $\#U = \# V = \# (X-F)$ and $\#(X-U) = \# (X-V) = \# F$.    
\end{lema}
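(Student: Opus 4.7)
The plan is to realize $U$ and $V$ in the form $U = \{x\} \cup A$ and $V = \{x\} \cup B$, where $A, B$ are disjoint subsets of $X - \{x\}$ of carefully chosen cardinalities. Setting $\kappa = \#(X-F) \geq 1$ (since $F \subsetneq X$), the target $\#U = \#V = \kappa$ translates to $\#A = \#B = \kappa$ when $\kappa$ is infinite (since $1 + \kappa = \kappa$) and to $\#A = \#B = \kappa - 1$ when $\kappa$ is finite. The condition $\#(X - U) = \#(X - V) = \#F = \#X$ then reduces to controlling the complements of $A$ and $B$ inside $X - \{x\}$, which itself has cardinality $\#X$ because $X$ is infinite.

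I would then split into three sub-cases. If $\kappa$ is finite, any pair of disjoint $A, B \subseteq X - \{x\}$ of cardinality $\kappa - 1$ works, since removing finitely many elements from an infinite set of cardinality $\#X$ still leaves a set of cardinality $\#X$. If $\kappa$ is infinite with $\kappa < \#X$, one picks disjoint $A, B \subseteq X - \{x\}$ of cardinality $\kappa$; by cardinal arithmetic, the leftover $(X - \{x\}) - (A \cup B)$ must have cardinality $\#X$, because otherwise $\#(X - \{x\}) = \kappa + \kappa + \mu$ would fail to reach $\#X$.

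The delicate case is $\kappa = \#X$: one cannot pick an arbitrary $A$ of cardinality $\kappa$ and hope that $(X - \{x\}) - A$ still has cardinality $\#X$. Here I would invoke the standard fact that any infinite set of cardinality $\lambda$ can be partitioned into two subsets each of cardinality $\lambda$, applied to $X - \{x\}$. This yields $X - \{x\} = A \sqcup B$ with $\#A = \#B = \#X = \kappa$, and then $U = \{x\} \cup A$, $V = \{x\} \cup B$ satisfy $\#(X - U) = \#B = \#X = \#F$, and likewise for $V$. This sub-case is the main obstacle, since the partition result for infinite sets is genuinely needed; the other cases reduce to routine cardinal arithmetic, and the required properties $U \cap V = \{x\}$, $\#U = \#V = \#(X-F)$, $\#(X-U) = \#(X-V) = \#F$ can then be checked directly.
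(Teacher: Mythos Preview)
Your proposal is correct and follows essentially the same approach as the paper: both split into the three cases $\#(X-F)$ finite, $\aleph_0 \leqslant \#(X-F) < \#X$, and $\#(X-F) = \#X$, and in each case build $U = \{x\} \cup A$, $V = \{x\} \cup B$ for suitable disjoint auxiliary sets $A,B$. The only cosmetic difference is that in the third case the paper partitions $X - (F \cup \{x\})$ rather than $X - \{x\}$, which makes no essential difference.
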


\begin{proof} Suppose that $ \#(X-F) < \aleph_{0}$. 

Since $X$ is infinite, there exists $U'$ and $V'$ disjoint subsets of $X$ such that $x \notin U' \cup V'$ and $\# U' = \# V' = \#(X-F)-1$. 

Suppose that $\aleph_{0} \leqslant \#(X-F) < \# X$. 

Since $X$ is infinite, there exists $U'$ and $V'$ disjoint subsets of $X$ such that $\# U' = \# V' = \#(X-F)$. We have that $\# (X-U') = \# X = \# F$ (since $\#(X-F) < \# X$) and, analogously,  $\# (X-V') = \# X = \# F$.

Suppose that $\#(X-F) = \# X$.

Take $U'$ and $V'$ disjoint subsets of $X$ such that $U' \cup V' = X-(F\cup\{x\})$ and $\# U' = \# V' = \# (X-F)$. We have that $\# (X-U') = \# F$ (since $F \subset X-U'$ and $\# F = \# X$) and, analogously, $\# (X-V') = \# F$.
 
In all cases, take $U = U' \cup \{x\}$ and $V = V' \cup \{x\}$. We have that $\# U = \# V = \#(X-F)$, $U \cap V = \{x\}$ and $\# (X-U) = \# (X-V) = \# X = \# F$.
\end{proof}

\begin{teo}\label{thm1}Let $X$ be a topological space. Then $Homeo(X) = bij(X)$ if and only if $X$ is cocardinal.
\end{teo}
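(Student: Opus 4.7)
The plan is first to dispatch the easy direction: a bijection $f : X \to X$ preserves the cardinality of any subset and of its complement, and the closed sets of a co-$\kappa$ topology are characterised entirely by these cardinalities, so $f$ permutes $Closed(X)$ and is a homeomorphism. The substance is the converse, so I would assume $Homeo(X) = bij(X)$ and aim to produce a cardinal $\kappa$ for which $Closed(X) = \{X\} \cup \{F \subseteq X : \#F < \kappa\}$.

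I would dispose of the finite case at once. If no nonempty proper subset of $X$ is closed, the topology is trivial, i.e.\ co-$1$. Otherwise fix a nonempty proper closed $F$; Lemma~\ref{samecardinalityisclosed} makes every subset of cardinality $\#F$ closed, and Lemma~\ref{lesscardinalityisclosed} makes every subset of strictly smaller cardinality closed. In particular every singleton is closed, and closure of $Closed(X)$ under finite unions then forces every proper subset to be closed, so $X$ is discrete. From here on I assume $X$ is infinite and split into two cases depending on whether a proper closed set of cardinality $\#X$ exists.

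\emph{Case A: every proper closed set has cardinality strictly less than $\#X$.} Set $S = \{\#F : F \in Closed(X) \setminus \{X\}\}$; by Lemma~\ref{samecardinalityisclosed} a proper subset of $X$ is closed iff its cardinality lies in $S$, and by Lemma~\ref{lesscardinalityisclosed} $S$ is downward closed among cardinals. Any such $S$ has the form $\{\alpha : \alpha < \kappa\}$ for a unique $\kappa$. Closure of $Closed(X)$ under finite unions forces $S$ to be closed under cardinal addition, which, since $X$ is infinite, rules out $S = \{0, 1, \dots, n\}$ with $1 \leq n < \aleph_0$ and hence forces $\kappa = 1$ or $\kappa \geq \aleph_0$. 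In every allowable case the topology on $X$ is precisely the co-$\kappa$ topology.

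\emph{Case B: some proper closed $F$ satisfies $\#F = \#X$.} This is the main obstacle, and it is exactly what Lemma~\ref{twosets} is tailored to handle. For an arbitrary $x \in X$, Lemma~\ref{twosets} supplies $U, V \subseteq X$ with $U \cap V = \{x\}$, $\#U = \#V = \#(X-F)$, and $\#(X-U) = \#(X-V) = \#F$. Since $\#F = \#(X-U)$ and $\#(X-F) = \#U$, there is a bijection $g : X \to X$ with $g(F) = X - U$, and by hypothesis it is a homeomorphism; thus $X - U$ is closed and $U$ is open. Symmetrically $V$ is open, so $\{x\} = U \cap V$ is open. Since $x$ was arbitrary, $X$ is discrete, which is the co-$\kappa$ topology for any $\kappa > \#X$, completing the proof.
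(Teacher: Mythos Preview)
Your proof is correct and follows essentially the same route as the paper: the same three lemmas are invoked at the same junctures, the finite case is handled by reducing to closed singletons, and the infinite case splits on whether a proper closed set of full cardinality exists, with Lemma~\ref{twosets} resolving Case~B exactly as in the paper. Your packaging via the downward-closed set $S$ of attained cardinalities is marginally tidier than the paper's $\sup$/$\max$ bookkeeping (it absorbs the co-$\kappa$ versus co-$\kappa^{+}$ distinction automatically), but the argument is the same.
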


\begin{proof}We already know that, for cocardinal spaces, $Homeo(X) = bij(X)$ holds.

Let $\kappa = \sup \{\kappa': \exists F \in Closed (X): F  \neq X, \# F = \kappa' \}$. 

Suppose that $\kappa = 0$.

If $F$ is closed in $X$ with $F \neq X$, then $\# F = 0$. Then $X$ has the trivial topology.

Suppose that $\kappa > 0$.

Suppose that $X$ is finite.

There exists $F \in Closed(X)$ such that $F \neq X$ and $\# F = \kappa$. By \textbf{Lemma \ref{lesscardinalityisclosed}}, every point is a closed set. Since $X$ is finite, it follows that $X$ has the discrete topology.

Suppose that $X$ is infinite.

Suppose that $\kappa \neq \# X$ or $\max \{\kappa'\!:\! \exists F\! \in\! Closed (X): F  \neq X, \# F = \kappa' \} \neq \# X$.
 
Let $\alpha < \kappa$. There exists cardinal $\kappa'$ such that $\alpha  \leqslant \kappa' \leqslant \kappa$ and there exists $F \in Closed(X)$ such that $F \neq X$ and $\# F = \kappa'$. By \textbf{Lemma \ref{lesscardinalityisclosed}}, every subset of $X$ with cardinality $\alpha$ is a closed set in $X$. If there exists a closed set $G$ with cardinality $\kappa$, then $\kappa \neq \# X$, by the assumptions on $\kappa$. Then every subset of $X$ with cardinality $\kappa$ is closed (by \textbf{Lemma \ref{samecardinalityisclosed}}). Since there is no closed set $G \neq X$ with cardinality bigger than $\kappa$, it follows that $X$ has the co-$\kappa^{+}$ topology. If there is no closed set different from $X$ with cardinality $\kappa$, it follows that $X$ has the co-$\kappa$ topology.

Suppose that $\kappa = \max \{\kappa': \exists F \in Closed (X): F  \neq X, \# F = \kappa' \} = \# X$.

There exists $F \in Closed(X)$ such that $F \neq X$ and $\# F = \# X$. Let $x \in X$. By \textbf{Lemma \ref{twosets}}, there exists $U_{x},V_{x} \subseteq X$ such that $U_{x}\cap V_{x} = \{x\}$, $\#U_{x} = \# V_{x} = \# (X-F)$ and $\#(X-U_{x}) = \# (X-V_{x}) = \# F$. Then there exists $f,g \in bij(X)$ such that $f(U_{x}) = X-F$ and $g(V_{x}) = X-F$. Since $bij(X) = Homeo(X)$, $U_{x}$ and $V_{x}$ are open sets, which implies that $\{x\}$ is an open set as well. Then $X$ has the discrete topology.

In all cases we get that $X$ is cocardinal.
\end{proof}

\begin{cor}\label{firstcorollary}Let $X$ be a topological space. Then $\mathcal{C}(X,X) = Map(X,X)$ if and only if $X$ has the trivial topology or the discrete topology.    
\end{cor}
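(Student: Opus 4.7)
The plan is to reduce the corollary to \textbf{Theorem \ref{thm1}} and then, in the resulting cocardinal space, rule out every co-$\kappa$ topology except the trivial and the discrete ones by producing an explicit non-continuous map. The reverse direction is immediate: if $X$ has the trivial topology the only open sets are $\emptyset$ and $X$, whose preimages under any map are open; and if $X$ has the discrete topology every subset, and hence every preimage, is open.

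For the forward direction, I would first observe that the hypothesis $\mathcal{C}(X,X) = Map(X,X)$ forces $Homeo(X) = bij(X)$: if $f \in bij(X)$, then $f$ is continuous by hypothesis, and $f^{-1}$ is also in $Map(X,X) = \mathcal{C}(X,X)$, hence continuous too, so $f$ is a homeomorphism. By \textbf{Theorem \ref{thm1}}, $X$ carries the co-$\kappa$ topology for some cardinal $\kappa$. It then remains to show that $\kappa = 1$ or $\kappa > \# X$.

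Suppose, for contradiction, that $\aleph_{0} \leqslant \kappa \leqslant \# X$. Then $\kappa > 1$, so every singleton is closed. First I would produce a proper non-closed subset $A \subseteq X$: if $\kappa < \# X$, any $A$ with $\# A = \kappa$ works; if $\kappa = \# X$, I can take $A = X - \{y\}$ for any $y \in X$, which has cardinality $\kappa$ because $X$ must be infinite (as $\kappa \geqslant \aleph_{0}$). In either case $\# A \geqslant \kappa$ and $A \neq X$, so $A$ is not closed. Now pick distinct $x_{0}, x_{1} \in X$ and define $f: X \to X$ by $f(x) = x_{0}$ for $x \in A$ and $f(x) = x_{1}$ otherwise. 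Then $f^{-1}(\{x_{0}\}) = A$ is not closed while $\{x_{0}\}$ is, so $f$ is not continuous, a contradiction. The only mildly delicate point is finding $A$ in the case $\kappa = \# X$; the rest is a short two-point construction.
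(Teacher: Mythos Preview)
Your proposal is correct and follows essentially the same route as the paper: reduce to \textbf{Theorem \ref{thm1}} via the observation that $\mathcal{C}(X,X) = Map(X,X)$ forces $Homeo(X) = bij(X)$, and then for $\aleph_{0} \leqslant \kappa \leqslant \# X$ exhibit a non-closed set and a two-valued map whose preimage of a closed singleton is that set. The only difference is cosmetic: the paper simply asserts that a non-closed proper subset $F$ exists in this range, while you explicitly construct $A$ (and split into the cases $\kappa < \# X$ and $\kappa = \# X$, which is not really necessary since any proper $A$ with $\# A = \kappa$ works in both cases).
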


\begin{proof}It is easy to see that if $X$ has the trivial topology or the discrete topology, then  $\mathcal{C}(X,X) = Map(X,X)$. 

Let $X$ with the co-$\kappa$ topology, for $\kappa$ a cardinal with $\aleph_{0} \leqslant \kappa < \# X^{+}$. Then there exists $F \subset X$ that is not closed. Let $x,y \in X$, with $x\neq y$. In this case, take a map $f: X \rightarrow X$ defined as $f(F) = x$, $f(X-F) = y$. Since $X$ has the co-$\kappa$ topology, with $\kappa > 1$, then $\{x\} \in Closed(X)$. But $F = f^{-1}(x)$, which implies that $f$ is not continuous. Thus $\mathcal{C}(X,X) \neq Map(X,X)$.

Since $\mathcal{C}(X,X) = Map(X,X)$ implies  $Homeo(X) = bij(X)$, then this corollary follows from the theorem above.
\end{proof}

\section{Proof of Theorem 2}

\begin{lema}\label{samecardinalityismeasurable}Let $(X,\Sigma)$ be a measurable space such that $Iso(X) = bij(X)$. Let $F \in \Sigma$ such that $\# F < \# X$. If $F' \subseteq X$ such that $\# F = \# F'$, then $F' \in \Sigma$.
\end{lema}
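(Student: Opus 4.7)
The plan is to mirror the proof of \textbf{Lemma \ref{samecardinalityisclosed}} essentially verbatim, since nothing in that argument used topology in an essential way beyond the fact that preimages of closed sets under homeomorphisms are closed; the analogous statement holds for $\sigma$-algebras under bimeasurable bijections.

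First I would establish the cardinality identity $\#(X-F) = \#(X-F')$. If $X$ is infinite, then $\# F < \# X$ forces $\#(X-F) = \# X$, and the same reasoning applied to $F'$ (which has $\# F' = \# F < \# X$) gives $\#(X-F') = \# X$. If $X$ is finite, the identity follows trivially from $\# F = \# F'$.

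From this identity I would construct a bijection $f \colon X \to X$ with $f(F) = F'$ and $f(X-F) = X-F'$, simply by choosing bijections between $F$ and $F'$ and between $X-F$ and $X-F'$ and gluing them. The hypothesis $Iso(X) = bij(X)$ then guarantees that $f$ is measurable with measurable inverse. In particular $f^{-1}$ is measurable, so $F' = (f^{-1})^{-1}(F) \in \Sigma$, which is the desired conclusion.

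There is really no obstacle here: the entire content of the lemma is the cardinality bookkeeping that produces the bijection, and the translation from the topological setting to the measurable setting is automatic because the definition of $Iso(X)$ is symmetric in $f$ and $f^{-1}$, just as $Homeo(X)$ was. The only mild subtlety worth noting explicitly in the write-up is that one should invoke measurability of $f^{-1}$ (rather than of $f$) to transport the already-measurable set $F$ to $F'$, but this is immediate from the definition of $Iso(X)$.
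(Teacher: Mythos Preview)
Your proposal is correct and follows exactly the approach the paper intends: the paper's own proof is simply ``Analogous to \textbf{Lemma \ref{samecardinalityisclosed}}'', and what you have written is precisely that analogous argument spelled out. The only cosmetic difference is that you make explicit the use of $f^{-1}$'s measurability to push $F$ forward to $F'$, which is a helpful clarification but not a substantive departure.
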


\begin{proof}Analogous to \textbf{Lemma \ref{samecardinalityisclosed}}.
\end{proof}

\begin{lema}\label{lesscardinalityismeasurable}Let $(X,\Sigma)$ be a measurable space such that $Iso(X) = bij(X)$. Let $F \in \Sigma$ such that $\# F < \# X$. If $F' \subseteq X$ such that $\# F > \# F'$, then $F' \in \Sigma$.
\end{lema}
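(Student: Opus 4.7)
The plan is to follow the structure of the proof of \textbf{Lemma \ref{lesscardinalityisclosed}}, splitting into the subcases $F$ finite and $F$ infinite. The main obstacle is that the closed-set argument relies on arbitrary intersections, which are unavailable in a $\sigma$-algebra, so the adaptation requires expressing $F'$ as a finite (in fact, very small) intersection of measurable sets supplied by \textbf{Lemma \ref{samecardinalityismeasurable}}.

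In the finite case $\# F = n < \aleph_{0}$, I would first show every singleton is measurable and then take finite unions. Given $x \in X$, pick any $H \ni x$ with $\# H = n$; by \textbf{Lemma \ref{samecardinalityismeasurable}}, $H \in \Sigma$. Since $\# F < \# X$, the set $X - H$ is nonempty, so for each $y \in H - \{x\}$ I can form $H_{y} = (H - \{y\}) \cup \{z_{y}\}$ with $z_{y} \in X - H$; then $H_{y} \in \Sigma$, with $x \in H_{y}$ and $y \notin H_{y}$. The finite intersection $H \cap \bigcap_{y \in H - \{x\}} H_{y}$ equals $\{x\}$, so $\{x\} \in \Sigma$, and any $F'$ with $\# F' < n$ is then a finite union of singletons.

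In the infinite case, the original proof intersects $F - \{x\}$ over $x \in F - G$, which can be uncountable; I would bypass this by writing $F'$ as the intersection of only two measurable sets. Since $\# F' < \# F < \# X$, the complement $X - F'$ has cardinality $\# X > \# F$, so it contains two disjoint subsets $A$ and $B$ each of cardinality $\# F$. Infinite cardinal arithmetic gives $\# (F' \cup A) = \# (F' \cup B) = \# F$, so by \textbf{Lemma \ref{samecardinalityismeasurable}} both sets lie in $\Sigma$. Since $A \cap B = \emptyset$, the intersection $(F' \cup A) \cap (F' \cup B)$ equals $F'$, placing $F'$ in $\Sigma$.
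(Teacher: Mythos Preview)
Your proof is correct and follows essentially the same strategy as the paper's: in both the finite and infinite cases the key is to realise the small set as a \emph{finite} intersection of sets of cardinality $\#F$, so that \textbf{Lemma \ref{samecardinalityismeasurable}} applies and the $\sigma$-algebra axioms suffice. The only cosmetic difference is that in the infinite case the paper first produces a subset $G\subset F$ with $\#G=\#F'$ by splitting $F-G$ into two halves $U',V'$ and intersecting $U'\cup G$ with $V'\cup G$, and then transfers to $F'$ via \textbf{Lemma \ref{samecardinalityismeasurable}}, whereas you place the two disjoint pieces $A,B$ directly in $X-F'$ and intersect $F'\cup A$ with $F'\cup B$; this saves one invocation of the previous lemma but is otherwise the same idea.
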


\begin{proof}Suppose that $F$ is finite.

Let $x \in F$ and $y \in X-F$. Then $\{x\} = \bigcap_{z \in F-\{x\}} (F-\{z\})\cup\{y\} $. By the lemma above, for every $z \in F-\{x\}$, the set $(F-\{z\})\cup\{y\} \in \Sigma$ , which implies that $\{x\} \in \Sigma$ and, since $F$ is finite, every subset of $F$ is also in $\Sigma$. By the lemma above, we get that $F' \in \Sigma$.

Suppose that $F$ is infinite.

Let $G \subset F$ such that $\# G =\# F'$. We have that \#(F - G) = \#F. Take $U'$ and $V'$ disjoint subsets of $F$ such that $\# U' = \# V' = \# F$ and $U'\cup V' = F- G$. Take $U = U' \cup G$ and $V = V' \cup G$. We have that $\# U = \# V = \# F$. So, by the lemma above, $U,V \in \Sigma$. But $U\cap V = G$, which implies that $G \in \Sigma$. Since $\# G = \# F'$, then, by the lemma above, $F' \in \Sigma$.
\end{proof}

\begin{lema}\label{secondlesscardinalityismeasurable}Let $(X,\Sigma)$ be a measurable space such that $Iso(X) = bij(X)$. Let $F \in \Sigma$ such that $\# F = \# X$ and $\# (X-F) = \# X$. If $F' \subseteq X$ such that $\# F > \# F'$, then $F' \in \Sigma$.
\end{lema}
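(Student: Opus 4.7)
The plan is to reduce to \textbf{Lemma \ref{samecardinalityismeasurable}} by producing a measurable subset of $F$ whose cardinality equals $\#F'$. Once we have such a set $G \in \Sigma$ with $\#G = \#F' < \#X$, that lemma immediately yields $F' \in \Sigma$, so the entire burden is to exhibit one measurable set of cardinality $\#F'$.

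First I would observe that $X$ is necessarily infinite: the hypotheses $\#F = \#(X-F) = \#X$ cannot hold for any finite $X$. Pick any $G \subseteq F$ with $\#G = \#F'$ and set
\[ H := G \cup (X-F). \]
Since $G$ and $X-F$ are disjoint, infinite cardinal arithmetic gives $\#H = \#F' + \#X = \#X$, while $X - H = F - G$ has cardinality $\#F = \#X$ (because $\#G < \#F$ and $F$ is infinite). Therefore $F$ and $H$ partition $X$ into two pieces of matching cardinalities $\#X$ and $\#X$, so there exists a bijection $f: X \to X$ with $f(F) = H$ and $f(X-F) = X-H$.

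The hypothesis $Iso(X) = bij(X)$ forces $f$ to be an isomorphism of measurable spaces, so $H = f(F) \in \Sigma$. Intersecting measurable sets, $G = F \cap H \in \Sigma$. As $\#G = \#F' < \#X$, \textbf{Lemma \ref{samecardinalityismeasurable}} delivers $F' \in \Sigma$, completing the argument.

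The only really creative step is the choice of $H$. A naive attempt to shrink $F$ directly does not work, since smaller chunks of $F$ are not a priori measurable. The trick is that by keeping the entire complement $X-F$ and replacing $F$ by $G$, we preserve both $\#H = \#X$ and $\#(X-H) = \#X$, which is exactly what is needed to route $H$ through a bijection with $F$; intersecting back with $F$ then recovers $G$. This is the main obstacle, and once it is handled the rest is bookkeeping plus one appeal to the previous lemma.
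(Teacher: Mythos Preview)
Your proof is correct and essentially identical to the paper's: both pick $G\subseteq F$ with $\#G=\#F'$, form $H=G\cup(X-F)$ (the paper calls it $G'$), use a bijection to transport $F$ onto $H$, and intersect back with $F$ to recover $G\in\Sigma$. Your write-up is actually more complete, since you explicitly justify that $X$ is infinite and explicitly invoke \textbf{Lemma~\ref{samecardinalityismeasurable}} at the end to pass from $G\in\Sigma$ to $F'\in\Sigma$, a step the paper leaves implicit.
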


\begin{proof}Take $G \subseteq F$ such that $\# G = \# F'$. Take $G' = (X-F)\cup G$. We have that $\#G'= \#(X- G') = \# X$. Take a bijection $f: X \rightarrow X$ such that $f(F) = G'$ and $f(X-F) = X-G'$. Since $Iso(X) = bij(X)$, then $G' \in \Sigma$, which implies that $G = F \cap G' \in \Sigma$. 
\end{proof}

\begin{teo}Let $(X,\Sigma)$ be a measurable space. Then $Iso(X) = bij(X)$ if and only if $X$ is cocardinal.
\end{teo}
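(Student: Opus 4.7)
The approach parallels Theorem \ref{thm1}, with the three preparatory lemmas playing the role of their topological counterparts and one new step forced by closure under complement. The forward direction is immediate: on the $\kappa$-co-$\kappa$ algebra, any $f \in bij(X)$ preserves $\#F$ and $\#(X-F)$ simultaneously, so $f(\Sigma) = \Sigma$. For the converse, I assume $Iso(X) = bij(X)$ and set
\[
\kappa = \sup\{\#F : F \in \Sigma,\ \#F < \#X\}.
\]
For $X$ finite, \textbf{Lemma \ref{lesscardinalityismeasurable}} forces every singleton into $\Sigma$ as soon as any non-trivial $F \in \Sigma$ exists, so $\Sigma$ is either trivial or discrete.

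For $X$ infinite and $\kappa < \#X$, \textbf{Lemmas \ref{samecardinalityismeasurable}} and \textbf{\ref{lesscardinalityismeasurable}} place every subset of cardinality less than $\kappa$ into $\Sigma$, and every subset of cardinality exactly $\kappa$ as well whenever the supremum is attained; closure under complement then gives $\Sigma \supseteq$ the $\kappa$-co-$\kappa$ or $\kappa^{+}$-co-$\kappa^{+}$ algebra, according as $\kappa$ is not or is attained. For the reverse inclusion, the definition of $\kappa$ already forbids $F \in \Sigma$ with $\kappa < \#F < \#X$; the serious point, and the step I expect to be the main obstacle, is ruling out any \emph{balanced} $F \in \Sigma$ with $\#F = \#(X-F) = \#X$. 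This is exactly what \textbf{Lemma \ref{secondlesscardinalityismeasurable}} was crafted for: the presence of any such $F$ would push measurability to every subset of cardinality strictly less than $\#X$, contradicting $\kappa < \#X$.

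Finally, for $X$ infinite with $\kappa = \#X$, the same lemmas combined with closure under complement already place every non-balanced subset into $\Sigma$. A direct bijection argument—sending one balanced set to another via an element of $bij(X) = Iso(X)$, using that any two balanced subsets share both cardinality and complement cardinality—then shows that the balanced subsets of $X$ belong to $\Sigma$ all at once or not at all: if none does, $\Sigma$ is the $\#X$-co-$\#X$ algebra; if any does, $\Sigma = 2^{X}$, i.e.\ the discrete $\#X^{+}$-co-$\#X^{+}$ algebra. The underlying subtlety throughout, absent from the topological case, is that every cardinality argument must remain symmetric in $F$ and $X-F$, because $\Sigma$ is closed under complement; the third lemma is precisely the device that absorbs this new symmetry.
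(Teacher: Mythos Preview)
Your argument has a genuine gap in the case $\kappa < \#X$. You claim that the existence of a balanced $F \in \Sigma$ (that is, $\#F = \#(X-F) = \#X$) yields a contradiction, because \textbf{Lemma~\ref{secondlesscardinalityismeasurable}} would then place every subset of cardinality strictly less than $\#X$ into $\Sigma$, forcing $\kappa = \sup\{\alpha : \alpha < \#X\}$. But this supremum equals $\#X$ only when $\#X$ is a limit cardinal; if $\#X = \mu^{+}$ is a successor, the supremum is $\mu < \#X$, and there is no contradiction at all.

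Concretely, take $X$ of cardinality $\aleph_{1}$ with the discrete $\sigma$-algebra. Then $\kappa = \aleph_{0} < \#X$, balanced measurable sets abound, and nothing is contradicted. The paper handles this correctly: when $\kappa < \#X$ and a balanced set $G$ is present, \textbf{Lemma~\ref{secondlesscardinalityismeasurable}} together with the definition of $\kappa$ forces $\#X = \kappa^{+}$, and then a further bijection argument (essentially the one you already deploy in your final paragraph) shows that \emph{every} balanced subset, hence every subset, lies in $\Sigma$, so $\Sigma$ is discrete. Your case split therefore needs one more branch: under $\kappa < \#X$, the balanced-set subcase does not disappear by contradiction but instead produces the discrete algebra, with the side conclusion $\#X = \kappa^{+}$.
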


\begin{proof}It is easy to see that, for cocardinal spaces, $Iso(X) = bij(X)$ holds.

Let $\kappa = \sup \{\kappa' \neq \# X: \exists F \in \Sigma: \# F = \kappa' \}$. 

Suppose that $\kappa = 0$.

Let $F \in \Sigma$ with $F \neq X$. If $\# F > 0$, then $\# F = \# X$. Analogously, $\#(X-F) = \# X$. By the previous lemma, if $x \in X$, then $\{x\} \in \Sigma$, contradicting the fact that $\kappa = 0$. So $\# F = 0$. Then $X$ has the trivial $\sigma$-algebra.

Suppose that $\kappa > 0$.

Suppose that $X$ is finite.

There exists $F \in \Sigma$ such that $F \neq X$ and $\# F = \kappa$. By \textbf{Lemma \ref{lesscardinalityismeasurable}}, every point is a measurable set. Since $X$ is finite, it follows that $X$ has the discrete $\sigma$-algebra.

Suppose that $X$ is infinite.

Let $\alpha < \kappa$. There exists a cardinal $\kappa'$ such that $\alpha  \leqslant \kappa' \leqslant \kappa$ (or $\alpha  \leqslant \kappa' < \kappa$, if $\kappa = \# X$) and there exists $F \in \Sigma$ such that $\# F = \kappa'$. By \textbf{Lemma \ref{lesscardinalityismeasurable}}, every subset of $X$ with cardinality $\alpha$ is a measurable set in $X$. 

Suppose that $\kappa \neq \# X$.

If there exists a measurable set $G$ with cardinality $\kappa$, then every subset of $X$ with cardinality $\kappa$ is measurable (by the fact that $\kappa \neq \# X$ and \textbf{Lemma \ref{samecardinalityismeasurable}}). Let $G$ be a measurable set with cardinality bigger than $\kappa$. By the definition of $\kappa$, $\# G = \# X$. 

Suppose \ that \ there \ is \ no \ measurable \ set \ $G$ \ with \ $\# G = \# X$ \ and \ $\# (X-G) = \# X$. If there is no measurable set with cardinality $\kappa$, then $X$ has the $\kappa$-co-$\kappa$ algebra. If there is a measurable set with cardinality $\kappa$, then $X$ has the $\kappa^{+}$-co-$\kappa^{+}$ algebra.

Suppose that there is a measurable set $G$ with $\# G = \# X$ and $\# (X-G) = \# X$. Let $\beta$ be a cardinal such that $\kappa \leqslant \beta < \# X$. Take $G' \subseteq G$ such that $\# G' = \beta$. By \textbf{Lemma \ref{secondlesscardinalityismeasurable}}, $G' \in \Sigma$. By the definition of $\kappa$, we get that $\beta = \kappa$ and then $\# X = \kappa^{+}$. We also get that all subsets of $X$ with cardinality $\kappa$ are measurable. Let $H \subset X$ with $\# H > \kappa$. Then $\# H = \# X$. If $\#(X- H) \leqslant \kappa$, then $X-H \in \Sigma$, which implies that $H \in \Sigma$. If $\#(X- H) = \# X $, then take a bijection $g: X \rightarrow X$ such that $g(G) = H$ and $g(X-G) = X-H$. Since $Iso(X) = bij(X)$, then $H \in \Sigma$. So we get that $X$ has the discrete $\sigma$-algebra. 

Suppose that $\kappa = \# X$.

Suppose \ that \ there \ is \ no \ measurable \ set \ $G$ \ with \ $\# G = \# X$ \ and \ $\# (X-G) = \# X$. Since for every $\alpha < \kappa$ and every $H \subset X$ with $\# H = \alpha$, $H \in \Sigma$, then $X$ has the $\kappa$-co-$\kappa$ algebra.

Suppose that there is a measurable set $G$ with $\# G = \# X$ and $\# (X-G) = \# X$. If $H$ is another subset of $X$ with $\# H = \# X$ and $\# (X-H) = \# X$, then $H \in \Sigma$, by the same argument as in the case $\kappa \neq \# X$. So every subset of $X$ is in $\Sigma$, i. e., $X$ has the discrete $\sigma$-algebra.

In all cases we get that $X$ is cocardinal.
\end{proof}

\begin{cor}Let $X$ be a measurable space. Then $Mor(X,X) = Map(X,X)$ if and only if $X$ has the trivial $\sigma$-algebra or the discrete $\sigma$-algebra.    
\end{cor}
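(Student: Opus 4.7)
The plan is to mirror the argument given for Corollary \ref{firstcorollary} in the topological setting, reducing to Theorem 2 and then excluding every cocardinal $\sigma$-algebra other than the trivial and discrete ones by exhibiting an explicit non-measurable map.

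For the easy direction, if $\Sigma = \{\emptyset, X\}$ then every preimage is $\emptyset$ or $X$, and if $\Sigma = 2^{X}$ every preimage is automatically measurable; so in both cases $Mor(X,X) = Map(X,X)$.

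For the converse I would first observe that $Mor(X,X) = Map(X,X)$ implies $Iso(X) = bij(X)$: any bijection $f \colon X \to X$ lies in $Map(X,X)$, and so does its set-theoretic inverse, so by hypothesis both $f$ and $f^{-1}$ are measurable. Thus by Theorem 2 the space $X$ carries the $\kappa$-co-$\kappa$ algebra for some cardinal $\kappa$. It remains to rule out every $\kappa$ with $\aleph_0^{+} \leqslant \kappa \leqslant \# X$ (the intermediate, non-trivial non-discrete cases; for finite $X$ no such $\kappa$ exists since the $\sigma$-algebra is undefined for $1 < \kappa \leqslant \aleph_0$, so the conclusion is automatic). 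Fix such a $\kappa$, take $F \subseteq X$ with $\# F \geqslant \kappa$ and $\# (X - F) \geqslant \kappa$ (such an $F$ exists precisely because the algebra is not discrete), so that $F \notin \Sigma$. Pick distinct $x, y \in X$ and define $f \colon X \to X$ by $f(F) = \{x\}$, $f(X - F) = \{y\}$. Since $\kappa > 1$, the singleton $\{x\}$ is measurable, yet $f^{-1}(\{x\}) = F \notin \Sigma$, so $f \in Map(X,X) \setminus Mor(X,X)$, contradicting the hypothesis.

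I do not expect a genuine obstacle: the argument is essentially a direct translation of the topological corollary, with the only minor subtlety being the verification that a non-measurable $F$ exists in every intermediate cocardinal $\sigma$-algebra, which is immediate from the definition. The heart of the work has already been carried out in Theorem 2; the corollary is a short epilogue.
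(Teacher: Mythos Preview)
Your proposal is correct and follows exactly the approach the paper intends: the paper's own proof is the single line ``Analogous to \textbf{Corollary \ref{firstcorollary}}'', and you have spelled out precisely that analogy, reducing to Theorem~2 and then eliminating the intermediate $\kappa$-co-$\kappa$ algebras by the same two-valued map used in the topological case.
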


\begin{proof}Analogous to \textbf{Corollary \ref{firstcorollary}}.
\end{proof}


\begin{thebibliography}{99}

\bibitem{Kun}
K. Kunen, Set Theory.
\textit{Studies in Logic}. College Publications, 2011

\bibitem{Sc}
R. L. Schilling, Measures, Integrals and Martingales.
\textit{Cambridge University Press}, 2005.

\bibitem{SS}
L. A. Steen and J. A. Seebach Jr., Counterexamples in Topology.
\textit{Holt, Rinehart and Winston Inc.}, 1970.


\end{thebibliography}
\end{document}